\DeclareRobustCommand{\bigO}{%
  \text{\usefont{OMS}{cmsy}{m}{n}O}%
}
\newcommand*\patchAmsMathEnvironmentForLineno[1]{%
 \expandafter\let\csname old#1\expandafter\endcsname\csname #1\endcsname
 \expandafter\let\csname oldend#1\expandafter\endcsname\csname end#1\endcsname
 \renewenvironment{#1}%
    {\linenomath\csname old#1\endcsname}%
    {\csname oldend#1\endcsname\endlinenomath}}%
\newcommand*\patchBothAmsMathEnvironmentsForLineno[1]{%
 \patchAmsMathEnvironmentForLineno{#1}%
 \patchAmsMathEnvironmentForLineno{#1*}}%
\newcommand{\vol}[1]{\mathsf{Volt}(}
\DeclareMathOperator{\bn}{bn}
\DeclareMathOperator{\spn}{span}
\definecolor{brightmaroon}{rgb}{0.76, 0.13, 0.28}
\definecolor{linkblue}{rgb}{0, 0.337, 0.227}
\newrobustcmd{\onesub}{\mathord{\includegraphics{figs/one-sub}}}
\newrobustcmd{\leftup}{\mathord{\includegraphics{figs/left-up}}}
\title{\MakeUppercase{Basis number of bounded genus graphs}}
\author{Florian Lehner\thanks{Department of Mathematics, University of Auckland, 
New Zealand} \,\,\,\,\,Babak Miraftab\thanks{School of Computer Science, Carleton University, Ottawa, ON, Canada.} }
\date{}
\begin{document}

\maketitle

\begin{abstract}
The \textit{basis number} of a graph $G$ is the smallest integer $k$ such that $G$ admits a basis $B$ for its cycle space, where each edge of $G$ belongs to at most $k$ members of $B$.
In this note, we show that every non-planar graph that can be embedded on a surface with Euler characteristic $0$ has a basis number of exactly $3$, proving a conjecture of Schmeichel from 1981.
Additionally, we show that any graph embedded on a surface $\Sigma$ (whether orientable or non-orientable) of genus $g$ has a basis number of $\mathcal{O}(\log^2 g)$.

\end{abstract}

\section{Introduction}
Consider a graph $G$. The \defin{cycle space} of $G$, denoted as $\mathcal{C}(G)$, forms a vector space over the field $\mathbb{F}_2$. In this space, the elements are spanning subgraphs of $G$ in which all degrees are even, where the vector addition is defined by the symmetric difference of the edge sets of these subgraphs. 
Topologically, the cycle space is characterized by the first homology group $\mathsf{H}_1(G, \mathbb{Z}_2)$. 
The dimension of the cycle space, known as the \defin{Betti number} of $G$, is given by $\beta(G) = |E| - |V| + 1$ (see \cite{MR1280460}).
Cycle space theory has a wide range of applications, with numerous results documented in the literature \cite{polytime, shortest, minimumcycles, homotop, whitney}. 
Additionally, the cycle space has been extensively studied in the context of planar graphs.
\noindent A \defin{planar graph} is a graph that can be drawn in a plane without any of its edges crossing.
MacLane’s celebrated planarity criterion \cite{maclane1970combinatorial} characterizes planar graphs in terms of their cycle spaces. 

\begin{lem}[MacLane’s planarity criterion \cite{maclane1970combinatorial}] \label{maclane}A graph is planar if and only if there is a basis for its cycle space such that every edge lies in at most two elements of the basis.
\end{lem}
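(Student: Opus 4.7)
The plan is to prove the two directions separately. The forward direction is a direct consequence of a planar embedding, while the converse requires combinatorially reconstructing a planar embedding from the given basis.

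\textbf{Forward direction ($\Rightarrow$).} I would fix a planar embedding of $G$ on $S^2$ and let $\mathcal{F}$ be the collection of face-boundary walks, viewed as elements of $\mathcal{C}(G)$. Two standard facts apply: each edge of $G$ lies on exactly two face boundaries, and $\sum_{F \in \mathcal{F}} F = 0$ in $\mathcal{C}(G)$. Discarding any single $F_0 \in \mathcal{F}$ therefore yields a linearly independent family of size $|\mathcal{F}|-1 = \beta(G)$ (by Euler's formula $|V|-|E|+|\mathcal{F}|=2$), hence a basis, in which every edge appears in at most two elements.

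\textbf{Reverse direction ($\Leftarrow$).} Suppose $B$ is a 2-basis of $\mathcal{C}(G)$. Set $C_0 = \sum_{C \in B} C$ and $\mathcal{F} = B \cup \{C_0\}$; a short parity computation shows that every non-bridge edge of $G$ lies in exactly two members of $\mathcal{F}$. My plan is to interpret $\mathcal{F}$ as the set of face boundaries of a combinatorial 2-cell embedding of $G$ and then verify that the resulting surface is the sphere. Concretely, for each vertex $v$ of $G$, I would form an auxiliary multigraph $H_v$ on the half-edges incident to $v$, joining two half-edges whenever they are consecutive at $v$ along some walk $F \in \mathcal{F}$. The two-faces-per-edge property forces $H_v$ to be $2$-regular; if each $H_v$ is a single cycle, it defines a cyclic rotation around $v$, and the induced rotation system embeds $G$ into a closed surface $\Sigma$ with $|\mathcal{F}|$ faces. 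Then
$$
\chi(\Sigma) \;=\; |V|-|E|+|\mathcal{F}| \;=\; |V|-|E|+(\beta(G)+1) \;=\; 2
$$
forces $\Sigma = S^2$, so $G$ is planar.

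\textbf{Main obstacle.} The delicate point is showing that each $H_v$ is \emph{connected}, i.e.\ a single cycle rather than a disjoint union of $k>1$ cycles. A disconnected $H_v$ effectively splits $v$ into $k$ virtual copies, producing a pseudo-surface of strictly larger Euler characteristic and no honest planar embedding of $G$. Ruling this out is precisely where the linear independence of $B$ (not merely that $B$ is a 2-cover of the edge set by cycles) must be invoked: any local splitting at $v$ would exhibit a nontrivial linear dependence among the elements of $\mathcal{F}$ incident to $v$, contradicting the fact that $B$ is a basis. I would combine this with a standard reduction to 2-connected blocks to deal with bridges and cut-vertices. I expect this connectivity verification for $H_v$ to be the technical heart of the proof.
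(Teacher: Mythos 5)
The paper does not prove this lemma at all (it is MacLane's theorem, quoted with a citation), so your attempt has to be measured against the classical proofs. Your forward direction is the standard face-boundary argument and is essentially fine, except that ``discarding any single $F_0$ therefore yields a linearly independent family'' is not a consequence of the two facts you list; the clean way is to note that the face boundaries \emph{span} $\mathcal C(G)$ (every cycle is the sum of the boundaries of the faces it encloses), so any $\beta(G)$ of them that still span must form a basis. Bridges, which lie on only one face, cause no harm since they lie in no element of the cycle space.

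The reverse direction, however, has a genuine gap exactly at the step you flag as the technical heart, and the justification you sketch for it is not merely incomplete but wrong as stated. First, the members of $B$ are arbitrary elements of $\mathcal C(G)$, not cycles, so ``consecutive at $v$ along some walk $F$'' is not well defined when an element of $\mathcal F$ has four or more edges at $v$ (or is disconnected); you must choose closed-walk decompositions and corner pairings. Second, and more seriously, the claim that a disconnected $H_v$ ``would exhibit a nontrivial linear dependence among the elements of $\mathcal F$ incident to $v$'' is false: take $G$ to be two triangles $T_1,T_2$ sharing a vertex $v$, with the 2-basis $B=\{T_1,T_2\}$ and $C_0=T_1+T_2$; then $H_v$ has two components (the reconstruction gives two spheres glued at a point), yet $B$ is linearly independent. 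So independence alone does not forbid splitting; what it really buys you is only the count $|\mathcal F|=\beta(G)+1$ (i.e.\ $C_0\neq 0$ and $C_0\notin B$). You do invoke a reduction to 2-connected blocks, but then the burden of the proof is to show that for a 2-connected graph the glued complex cannot be a pseudosurface (equivalently, that splitting vertices and applying $\chi\le 2$ for the resulting closed surface forces no splitting and $\chi=2$), and no argument for that is given -- it is precisely here that 2-connectivity must be used globally, not via a local dependence at $v$. Alternatively, the classical route avoids reconstruction altogether: reduce to blocks, apply Kuratowski's theorem, and show by a counting argument that $K_5$ and $K_{3,3}$ admit no 2-basis, after checking that 2-bases are inherited by subdivisions and blocks. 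As written, your proposal identifies the right difficulty but does not resolve it.
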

\noindent Schmeichel \cite{MR615307} introduced a specific term for the property described in MacLane’s planarity criterion.
\begin{defn}[{cf.\cite{MR615307}}]
The \defin{basis number} of a graph~$G$ denoted by $\bn(G)$ is defined as the smallest integer~$k$ for which~$G$ admits a basis~$\mathcal{B}$ that spans its cycle space, such that each edge in~$G$ belongs to no more than~$k$ members of~$\mathcal{B}$.
\end{defn}
\noindent For instance, by Maclane's theorem 
we know that a graph is planar if and only if the basis number of~$G$ is~$2$.
A \defin{$k$-basis} is a basis~$\mathcal B$ where every edge in~$G$ is contained in at most~$k$ members of~$\mathcal B$. 
There are several results on this topic, see Alsardary \cite{MR1844307}, Alzoubi and M. M. M. Jaradat \cite{MR2218262,MR2139817}, M. M. M. Jaradat \cite{MR1955410,MR2743750}, and McCall \cite{MR801601}.

One may inquire about the basis number of generalizations of planar graphs, as discussed in \cite{biedl}. In 1981, Schmeichel proved that if $G$ is a graph with genus $g$, then the basis number of $G$ is at most $2g + 2$ \cite[Theorem 6]{MR615307}. Additionally, he conjectured the following.

\begin{conj}{\rm\cite{MR615307}}
Every toroidal graph has a basis number $3$.
\end{conj}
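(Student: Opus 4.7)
The lower bound $\bn(G) \ge 3$ for non-planar toroidal $G$ is immediate from Lemma~\ref{maclane}, so the task is to show $\bn(G) \le 3$. Since $G$ is non-planar and toroidal, its orientable genus equals $1$; by a theorem of Youngs, any minimum-genus embedding of a connected graph is $2$-cell, so I may work with a fixed $2$-cell embedding of $G$ in $T^2$.

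The construction is homological. Let $\mathcal{F} = \{\partial f : f \in F(G)\}$ denote the set of face boundaries. In $\mathcal{C}(G)$, the unique linear relation among the face boundaries is $\sum_{f} \partial f = 0$, so $\langle \mathcal{F} \rangle$ has dimension $|F| - 1$. Euler's formula $|V| - |E| + |F| = 0$ gives $|F| - 1 = \beta(G) - 2$, so $\langle \mathcal{F} \rangle$ has codimension $2$ in $\mathcal{C}(G)$, and the quotient is canonically $H_1(T^2; \mathbb{F}_2) \cong \mathbb{F}_2^2$. Every edge of $G$ lies in exactly two members of $\mathcal{F}$.

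The technical heart is the following lemma, which I expect to be the main obstacle: \emph{there exist edge-disjoint cycles $\alpha, \beta$ in $G$ whose homology classes form a basis of $H_1(T^2; \mathbb{F}_2)$.} Granting the lemma, fix any face $f_0$ and set $\mathcal{B} = (\mathcal{F} \setminus \{\partial f_0\}) \cup \{\alpha, \beta\}$. Then $|\mathcal{B}| = |F| + 1 = \beta(G)$, and $\mathcal{B}$ is linearly independent, since the $|F| - 1$ remaining face boundaries span $\langle \mathcal{F} \rangle$ while $\alpha, \beta$ descend to a basis of the quotient. Each edge of $G$ lies in at most $2$ of the remaining face boundaries, and by edge-disjointness in at most one of $\{\alpha, \beta\}$; thus each edge is in at most $3$ elements of $\mathcal{B}$, giving $\bn(G) \le 3$.

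For the lemma itself, I would take $\alpha$ to be a shortest noncontractible cycle of $G$ in $T^2$ (automatically simple and non-separating on the torus) and search for $\beta$ in $G - E(\alpha)$ with $[\beta]$ transverse to $[\alpha]$. Cutting $T^2$ along $\alpha$ produces a cylinder with boundary circles $\alpha^+, \alpha^-$, and a cycle $\beta$ of the required type corresponds to a walk in $G - E(\alpha)$ between a vertex on $\alpha^+$ and a vertex on $\alpha^-$. Existence of such a walk can be extracted from the $2$-cell hypothesis, either by using that $\alpha$ dualises to a non-separating cycle in $G^*$ whose removal leaves $G^*$ connected, or by tracing the boundary walk of a face that contains edges of $\alpha$ on both sides. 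The remaining subtlety is to promote the walk to a closed cycle $\beta$ in $G$ and verify $[\beta] \ne 0, [\alpha]$ in $H_1(T^2; \mathbb{F}_2)$; this follows once $\beta$ is observed to cross $\alpha$ an odd number of times, which forces $[\beta]$ to be the transverse generator.
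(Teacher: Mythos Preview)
Your reduction to the face boundaries plus two homologically independent cycles is exactly the paper's starting point, but the paper does \emph{not} attempt to make those two cycles edge-disjoint. Instead, it shows the weaker statement that one can choose $x' = x+h$ and $y' = y+k$ (with $h,k$ sums of face boundaries) so that $x' \cap y'$ is contained in a \emph{single} face boundary $f_0$; removing $f_0$ then gives the $3$-basis. The paper achieves this by cutting along the theta graph $x \cup y$ to obtain a fundamental hexagon and running a geometric case analysis on how faces sit inside it.

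Your stronger lemma---existence of edge-disjoint $\alpha,\beta$ generating $H_1(T^2;\mathbb{F}_2)$---would give a cleaner proof, but your sketch has a genuine gap. A path in the cut cylinder from a vertex on $\alpha^+$ to a vertex on $\alpha^-$ need not start and end at copies of the \emph{same} vertex of $\alpha$; if the endpoints differ, closing it to a cycle forces you to use an arc of $\alpha$, destroying edge-disjointness. Your two suggested fixes do not resolve this: the dual argument (``$\alpha$ dualises to a non-separating cycle in $G^*$'') concerns connectivity of $G^*$, not the existence of a suitable primal cycle in $G-E(\alpha)$; and ``a face with edges of $\alpha$ on both sides'' need not exist (there is no separating side for a non-separating $\alpha$, and in the cut cylinder no single face need touch both boundary components). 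Finally, even if you produce a closed walk in $G-E(\alpha)$, extracting from it a simple cycle whose $\mathbb F_2$-intersection number with $\alpha$ is odd is not automatic: the walk may decompose into several cycles, each crossing $\alpha$ evenly at vertices.

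In short, the scaffold is right, but the key lemma is where all the difficulty lives, and the paper's approach sidesteps it by asking only that the intersection fit inside one face rather than be empty.
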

\noindent In this paper, we prove the conjecture. More generally, we prove the following theorem.
\begin{restatable}{thm}{lowgenus}

\label{thm:lowgenus}
    Let $G$ be a non-planar graph which can be embedded on a surface of Euler characteristic $0$. Then $\bn (G) = 3$. 
\end{restatable}
\noindent We also investigate the relationship between genus and basis number for graphs of higher genus and show that the basis number is asymptotically much smaller than the genus of a graph.

\begin{restatable}{thm}{highgenus}

\label{thm:highgenus}
Let $G$ be a graph embedded on a (orientable or non-orientable) surface $\Sigma$ of genus $g$. 
Then $\bn(G)=\bigO(\log(g)^2)$.
\end{restatable}
\section{Preliminaries}

Throughout this short note, we shall use $\log$ for the base-$2$ logarithm and $\ln$ for the natural (base-$e$) logarithm.

Let us write $O_g$ for the \defin{orientable surface of genus $g$}, that is, a surface homeomorphic to a sphere with $g$ handles, and $N_g$ for the \defin{non-orientable surface of (non-orientable) genus $g$}, that is, a surface homeomorphic to a sphere with $g$ cross-caps. The well-known classification of compact surfaces states that every compact surface without boundary is homeomorphic to either $O_g$ for some $g \geq 0$, or $N_g$ for some $g \geq 1$.

An \defin{embedding} of a graph $G$ on a surface $\Sigma$ is a drawing of $G$ on $\Sigma$ where no edge crossings are allowed. Whenever we talk about embeddings in this paper, it will be assumed that $\Sigma$ is a compact surface without boundary, and that each region of $\Sigma \sm G$ is homeomorphic to an open disc known as a \defin{face}. 

Let $G=(V,E)$ be a graph embedded on a surface $\Sigma$ and let $F$ denote the set of faces of the embedding. The well-known Euler formula states that $|V| - |E| + |F| = 2 - 2g$ if $\Sigma$ is orientable of genus $g$, and $|V| - |E| + |F| = 2 - g$ if $\Sigma$ is non-orientable of genus $g$. The quantity $\chi = |V| - |E| + |F|$ is often called the Euler characteristic of the surface $\Sigma$; this only depends on $\Sigma$ and not on the graph, so we denote it by $\chi(\Sigma)$.

A \defin{(topological) cycle} on a surface $\Sigma$ is a homeomorphic image of the unit circle $\mathbb S^1$
in $\Sigma$. A \defin{(graph theoretical) cycle} in a graph $G$ is a connected, 2-regular subgraph of $G$. Note that if $G$ is embedded in $\Sigma$, then every graph theoretical cycle in $G$ corresponds to a topological cycle in $\Sigma$. Since these are the only cycles of interest in this paper, we may drop the distinction between the two notions; in other words, when talking about cycles we always mean cycles in an embedded graph which (by slight abuse of notation) are identified with their respective embeddings. A cycle $C$ is called \defin{separating} if $\Sigma \sm C$ has more than one connected component, and \defin{non-separating} otherwise.

The \defin{cycle space} of a graph $G = (V,E)$ is the set of all subgraphs of $G$ in which all degrees are even. This can be seen as a vector space over $\mathbb F_2$, more specifically a subspace of $\mathbb F_2^{|E|}$ by identifying each subgraph with the characteristic vector of the edges contained in it. A \defin{cycle basis} is a basis of the cycle space.

Note that elements of the cycle space are not necessarily cycles, but there are cycle bases consisting only of cycles. For instance, it is well known that if $T$ is a spanning tree of $T$, then the \defin{fundamental cycles} with respect to $T$, that is, the cycles containing precisely one edge not in $T$, form a cycle basis of $G$. In particular, the dimension of the cycle space of a graph $G=(V,E)$ is $|E|-|V|+1$. 

Assume that $G$ is embedded on a surface $\Sigma$. Note that each edge $e$ either lies in the closures of two different faces, or in the closure of exactly one face. To each face $F$ of the embedding we can define an element of the cycle space called the \defin{face boundary} of $F$, consisting of all edges in the closure of $F$ which also lie in the closure of some other face. By slight abuse of notation, we will sometimes also refer to a face $F$ as an element of the cycle space; this will always mean the face boundary of $F$. Each edge either appears in $0$ or in $2$ face boundaries, thus the sum of all face boundaries in the cycle space is $0$, or in other words, every face boundary is the sum of all other face boundaries. Moreover, every separating cycle $C$ can be written as the sum of the face boundaries of the faces contained in one connected component of $\Sigma \sm C$.

In general, the face boundaries alone do not generate the whole cycle space of an embedded graph. However, the following lemma can be deduced from the main result of~\cite{MR754916}.

\begin{lem}
    \label{lem:face-basis}
    Let $G$ be a graph embedded on a surface $\Sigma$, let $\mathcal F$ be the set of face boundaries of the embedding, and let $T$ be a spanning tree of $G$.
    There is a set $U$ of $2-\chi(\Sigma)$ edges not in $T$ such that the fundamental cycles of edges in $U$ with respect to $T$ together with any $|\mathcal F|-1$ elements of $\mathcal F$ form a cycle basis for $G$.
\end{lem}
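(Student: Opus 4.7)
The plan is to recognise the setup as (part of) the cellular chain complex of $\Sigma$ with $\mathbb F_2$ coefficients. Letting $\partial_2\colon C_2 \to C_1$ and $\partial_1\colon C_1 \to C_0$ be the cellular boundary maps, we have $\mathcal C(G) = \ker \partial_1$ and $\operatorname{span}(\mathcal F) = \operatorname{im}\partial_2$, so the quotient $\mathcal C(G)/\operatorname{span}(\mathcal F)$ is canonically isomorphic to $H_1(\Sigma;\mathbb F_2)$. Every closed surface is $\mathbb F_2$-orientable, so $\dim H_1(\Sigma;\mathbb F_2) = 2 - \chi(\Sigma)$ in both the orientable and non-orientable case, and this is the crucial input to feed into a dimension count.

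First I would verify the numerology. Combining $\dim \mathcal C(G) = |E|-|V|+1$ with Euler's formula $|V|-|E|+|F| = \chi(\Sigma)$ gives $\dim \mathcal C(G) = (|\mathcal F|-1) + (2-\chi(\Sigma))$, which is exactly the number of elements in the proposed basis. Next, every edge of $G$ lies in the closure of either $0$ or $2$ faces, so $\sum_{F\in \mathcal F} F = 0$. Together with $\dim(\mathcal C(G)/\operatorname{span}(\mathcal F)) = 2-\chi(\Sigma)$ this forces $\dim \operatorname{span}(\mathcal F) = |\mathcal F|-1$, so no further relation among the face boundaries exists and any $|\mathcal F|-1$ of them are linearly independent and form a basis of $\operatorname{span}(\mathcal F)$.

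To produce $U$, I would use that the $|E|-|V|+1$ fundamental cycles $\{C_e : e \in E\setminus T\}$ form a basis of $\mathcal C(G)$, so their images $[C_e]$ span the $(2-\chi(\Sigma))$-dimensional quotient. By a standard basis-extraction argument one picks $U \subseteq E\setminus T$ with $|U| = 2-\chi(\Sigma)$ such that $\{[C_e] : e \in U\}$ is a basis of the quotient. Linear independence of $\{C_e : e \in U\}$ together with any $|\mathcal F|-1$ chosen face boundaries is then immediate: a putative relation $\sum_{e\in U}\alpha_e C_e + \sum_{F\in S}\beta_F F = 0$, projected modulo $\operatorname{span}(\mathcal F)$, becomes $\sum_{e\in U}\alpha_e [C_e] = 0$ and kills all $\alpha_e$, after which the surviving relation among the $|\mathcal F|-1$ independent face boundaries kills all $\beta_F$.

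The main obstacle is really the input $\dim H_1(\Sigma;\mathbb F_2) = 2-\chi(\Sigma)$, equivalently that the only $\mathbb F_2$-linear relation among the face boundaries is $\sum_F F = 0$, equivalently $H_2(\Sigma;\mathbb F_2) \cong \mathbb F_2$. For orientable surfaces this is classical via the fundamental class, but for non-orientable surfaces $H_2(\Sigma;\mathbb Z) = 0$ and one genuinely needs $\mathbb F_2$-orientability. The cited reference~\cite{MR754916} appears to be invoked precisely to supply this combinatorially, avoiding a direct appeal to manifold topology; once it is in hand, the rest of the argument is pure linear algebra.
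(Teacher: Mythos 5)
Your proof is correct. Note that the paper never proves \Cref{lem:face-basis} itself: it only remarks that the lemma "can be deduced from the main result of~\cite{MR754916}", so your cellular-homology argument is a genuinely different, self-contained route rather than a reconstruction of an in-text proof. The structure is sound: since the embedding is assumed cellular, $\mathcal C(G)=\ker\partial_1$, $\spn(\mathcal F)=\operatorname{im}\partial_2$, hence $\mathcal C(G)/\spn(\mathcal F)\cong H_1(\Sigma;\mathbb F_2)$, and the count $\dim\mathcal C(G)=|E|-|V|+1=(|\mathcal F|-1)+(2-\chi(\Sigma))$ together with $\dim H_1(\Sigma;\mathbb F_2)=2-\chi(\Sigma)$ forces $\dim\spn(\mathcal F)=|\mathcal F|-1$, so the total sum is the unique relation and any $|\mathcal F|-1$ face boundaries are independent; extracting $U$ from the fundamental cycles so that their classes form a basis of the quotient then makes the independence (and, by cardinality, the spanning) of the combined set immediate, and the choice of $U$ is indeed independent of which face boundary is dropped, matching the quantifier order in the statement. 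Two minor points you should make explicit: first, the paper's ``face boundary'' (edges in the closure of $F$ that also lie in the closure of another face) coincides with the mod-$2$ cellular boundary $\partial_2 F$ because an edge incident to only one face is traversed twice by that face's boundary walk and cancels over $\mathbb F_2$; second, the topological input you isolate, $H_2(\Sigma;\mathbb F_2)\cong\mathbb F_2$ for every closed surface (equivalently $\dim H_1(\Sigma;\mathbb F_2)=2-\chi(\Sigma)$ in both the orientable and non-orientable cases), is classical, e.g.\ from the classification of surfaces or mod-$2$ Poincar\'e duality, and connectivity of $G$ (needed for $\dim\mathcal C(G)=|E|-|V|+1$ and the spanning tree) is automatic from cellularity. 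Compared with the paper's approach, the citation keeps the note short, while your argument makes the lemma self-contained, treats orientable and non-orientable surfaces uniformly, and makes transparent where the number $2-\chi(\Sigma)$ comes from.
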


We point out that the fundamental cycles in this lemma are necessarily non-separating since otherwise they would not be linearly independent from the face boundaries.

Recall that a \defin{$k$-basis} of the cycle space is a basis $\mathcal B$ such that every edge is contained in at most $k$ members of $\mathcal B$, and the \defin{basis number} of a graph $G$, denoted by $\bn (G)$ is the minimal $k$ for which there is a $k$-basis. Since every edge is contained in at most two face boundaries, \Cref{lem:face-basis} immediately implies that the basis number of a graph drawn on a surface $\Sigma$ is at most $4 - \chi(\Sigma)$. If $\Sigma$ is orientable of genus $g$, this means that $\bn(G) \leq 2+2g$, if $\Sigma$ is non-orientable of genus $g$ , then $\bn(G) \leq 2+g$.

\section{Basis number of graphs embedded on surfaces with low genus}

In this section show that the basis number of every non-planar graph which can be embedded on the torus $O_1$, the projective plane $N_1$, and the Klein bottle $N_2$ is $3$. 
For the projective plane this immediately follows from the discussion after \Cref{lem:face-basis}: the Euler characteristic is 1, therefore the facial cycles together with one additional cycle generate the cycle space.

The torus and the Klein bottle both have Euler characteristic 0, so we need to add two cycles to the set of face boundaries to generate the whole cycle space. This gives a generating set where each edge is contained in at most 4 of the elements: two facial cycles and at most two of the additional cycles. Recall that we need to remove precisely one face boundary from this generating set to obtain a basis. In particular, if there is a face boundary $f_0$ which contains all edges in the intersection of the two additional cycles, then removing this face boundary yields a $3$-basis. 

The following easy lemma allows us to replace a pair of cycles with another pair of cycles whose intersection is contained in a face boundary. We point out that when talking about unions or intersections of elements of the cycle space, we always mean unions or intersections of the corresponding edge sets. 

\begin{lem}
    \label{lem:replacement}
    Let $G$ be a graph with cycle space $\mathcal C(G)$, let $\mathcal F \subseteq \mathcal C(G)$ and let $x,y \in \mathcal C(G)$. Assume that we can find $h,k \in \spn (\mathcal F)$ and $f_0\in \mathcal F$ such that
    \begin{align*}
        x \cap y & \subseteq h \cup k \cup f_0,&
        x \cap k & \subseteq h \cup y \cup f_0,&
        h \cap y & \subseteq x \cup k \cup f_0,&
        h \cap k & \subseteq x \cup y \cup f_0.
    \end{align*}
    Then $\spn(\mathcal F \cup \{x,y\}) = \spn(\mathcal F \cup \{x+h,y+k\})$ and $(x+h) \cap (y+k) \subseteq f_0$.
\end{lem}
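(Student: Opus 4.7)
The plan is to handle the two conclusions independently. The equality of spans is essentially immediate: since $h$ and $k$ both lie in $\spn(\mathcal F)$, each of $x$ and $y$ can be recovered from $\mathcal F \cup \{x+h,y+k\}$ (for instance, $x = (x+h) + h$), and symmetrically each of $x+h$ and $y+k$ lies in $\spn(\mathcal F \cup \{x,y\})$. This mutual inclusion gives the first half of the statement in two lines and does not use any of the four set-theoretic hypotheses.

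The substance of the lemma is the inclusion $(x+h) \cap (y+k) \subseteq f_0$, which I would prove edge by edge. Since the cycle space is a vector space over $\mathbb F_2$ and the paper stresses the edge-set reading of its operations, the sum $x+h$ is the symmetric difference $x \triangle h$ and likewise $y+k = y \triangle k$. Fix an arbitrary edge $e \in (x+h) \cap (y+k)$. Membership in $x \triangle h$ forces exactly one of $\{x,h\}$ to contain $e$, and similarly exactly one of $\{y,k\}$ contains $e$, giving four configurations. In each configuration, $e$ lies in an intersection that matches the left-hand side of exactly one hypothesis: either $x \cap y$, $x \cap k$, $h \cap y$, or $h \cap k$. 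The corresponding hypothesis then places $e$ in a union of three sets, two of which are precisely the two sets that, in the current configuration, do \emph{not} contain $e$; the only remaining option is $f_0$. For example, if $e \in x \cap y$ and $e \notin h \cup k$, the first hypothesis $x \cap y \subseteq h \cup k \cup f_0$ forces $e \in f_0$, and the other three cases are disposed of identically by the other three hypotheses.

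I do not anticipate a real obstacle: the four hypotheses are tailored exactly to cover the four cases arising from the symmetric-difference analysis, so the argument reduces to a short case check. The only place to be mildly careful is the $\mathbb F_2$ convention, so that $x+h$ is interpreted as the edge set $(x \cup h) \sm (x \cap h)$ rather than as any multiset sum, and that unions and intersections are read as the paper prescribes just before the lemma statement.
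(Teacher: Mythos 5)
Your proposal is correct and follows essentially the same route as the paper: the span equality is observed directly from $h,k \in \spn(\mathcal F)$, and the inclusion $(x+h)\cap(y+k) \subseteq f_0$ is proved by the same symmetric-difference case analysis, where an edge in both sums lies in exactly one of $\{x,h\}$ and exactly one of $\{y,k\}$, and each of the four resulting configurations is handled by the matching hypothesis.
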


\begin{proof}
    Since $h$ and $k$ are contained in $\spn (\mathcal F)$, replacing $x$ by $x+h$ and $y$ by $y+k$ does not change the span. For the second part, note that if an edge $e$ is in both $x+h$ and $y+k$, then it is contained in precisely one of the sets $x$ and $h$, and in precisely one of the sets $y$ and $k$. Consequently it must be contained in one of the sets $(x\cap y) \sm(h \cup k)$, $(x\cap k) \sm(h \cup y)$, $(h\cap y) \sm(x \cup k)$, and $(h\cap k) \sm(x \cup y)$. By assumption, all of these sets are contained in $f_0$.
\end{proof}

\lowgenus*


\begin{proof}
    Let $\Sigma$ be the surface the graph is embedded in. Denote by $\mathcal F$ the set of face boundaries. By \Cref{lem:face-basis}, there are two fundamental cycles $x$ and $y$ with respect to some spanning tree $T$ of $G$ such that $\mathcal F \cup \{x,y\}$ generates the cycle space.

    Our goal is to find $h,k \in \spn (\mathcal F)$ and $f_0 \in \mathcal F$ satisfying the conditions of \Cref{lem:replacement}. Note that in this case $\mathcal F \cup \{x+h,y+k\}$ generates the cycle space. Since $\mathcal F$ is linearly dependent, $\mathcal B := (\mathcal F \sm \{f_0\}) \cup \{x+h,y+k\}$ still generates the cycle space. If an edge is contained in both $x+h$ and $y+k$, it is only contained in one element of $\mathcal F \sm f_0$ because $(x+h) \cap (y+k) \subseteq f_0$. Thus every edge is contained in at most $3$ elements of $\mathcal B$, therefore proving the theorem.

    It remains to find $h$, $k$, and $f_0$. If $x \cap y = \emptyset$, then we may set $h = k = \emptyset$, and $f_0 \in \mathcal F$ arbitrary; this satisfies the conditions of \Cref{lem:replacement} because the left hand side in each of the inclusions in the condition is $\emptyset$. 
    
    Hence we may assume that $x \cap y \neq \emptyset$. Since $x$ and $y$ are both fundamental cycles with respect to the tree $T$, the set $x \cap y$ is the edge set of a path $P_{x,y}$ in $T$ and the sets $x \sm y$ and $y \sm x$ are the edge sets of (vertex) disjoint paths $P_x$ and $P_y$ connecting the endpoints of $P_{x,y}$. In particular, the subgraph $H$ of $G$ spanned by the edge set $x \cup y$ consists of two vertices and three disjoint paths between these two vertices.

    We claim that the embedding of $H$ has exactly one face. Indeed, $H$ contains precisely $3$ cycles: $x$, $y$, and $x+y$. If $H$ had more than one face, then one of these cycles would have to be separating, and thus it would be contained in $\spn(\mathcal F)$. But then $(\mathcal F\sm\{f\}) \cup \{x,y\}$ would be linearly dependent for every $f \in \mathcal F$ and therefore could not be a basis.

    Cutting along $x \cup y$ we therefore obtain a surface homeomorphic to a closed disk (a fundamental polygon of $\Sigma$ which we denote by $\Sigma'$). The embedding of $G$ lifts to an embedding of a graph $G'$ in $\Sigma'$ where the paths $P_{x,y}$, $P_x$, and $P_y$ in $G$ each correspond to two paths in $G'$ embedded on the boundary of $\Sigma'$. Consequently, the boundary of $\Sigma'$ can be decomposed into 6 paths (the sides of the fundamental polygon) corresponding to the two sides of $P_{x,y}$, $P_x$, and $P_y$, which we denote by $p_{x,y}$, $p_{x,y}'$, $p_x$, $p_x'$, $p_y$, and $p_y'$, respectively. 
    
    Note that the sides corresponding to the two sides of the same path in $G$ cannot be adjacent on the boundary. Further, note that we can swap the roles of the different paths by replacing $x$ or $y$ with $x+y$. Thus, up to swapping the roles of the paths we end up in one of the two scenarios depicted in \Cref{fig:polygons}. We note that if the surface is the torus, then we end up with the polygon on the left side of \Cref{fig:polygons}, and if the surface is the Klein bottle, then we end up with the polygon on the right side (but this won't matter for the rest of the proof).

\begin{figure}
\centering
\subfloat[\centering ]{{\includegraphics[scale=0.90]{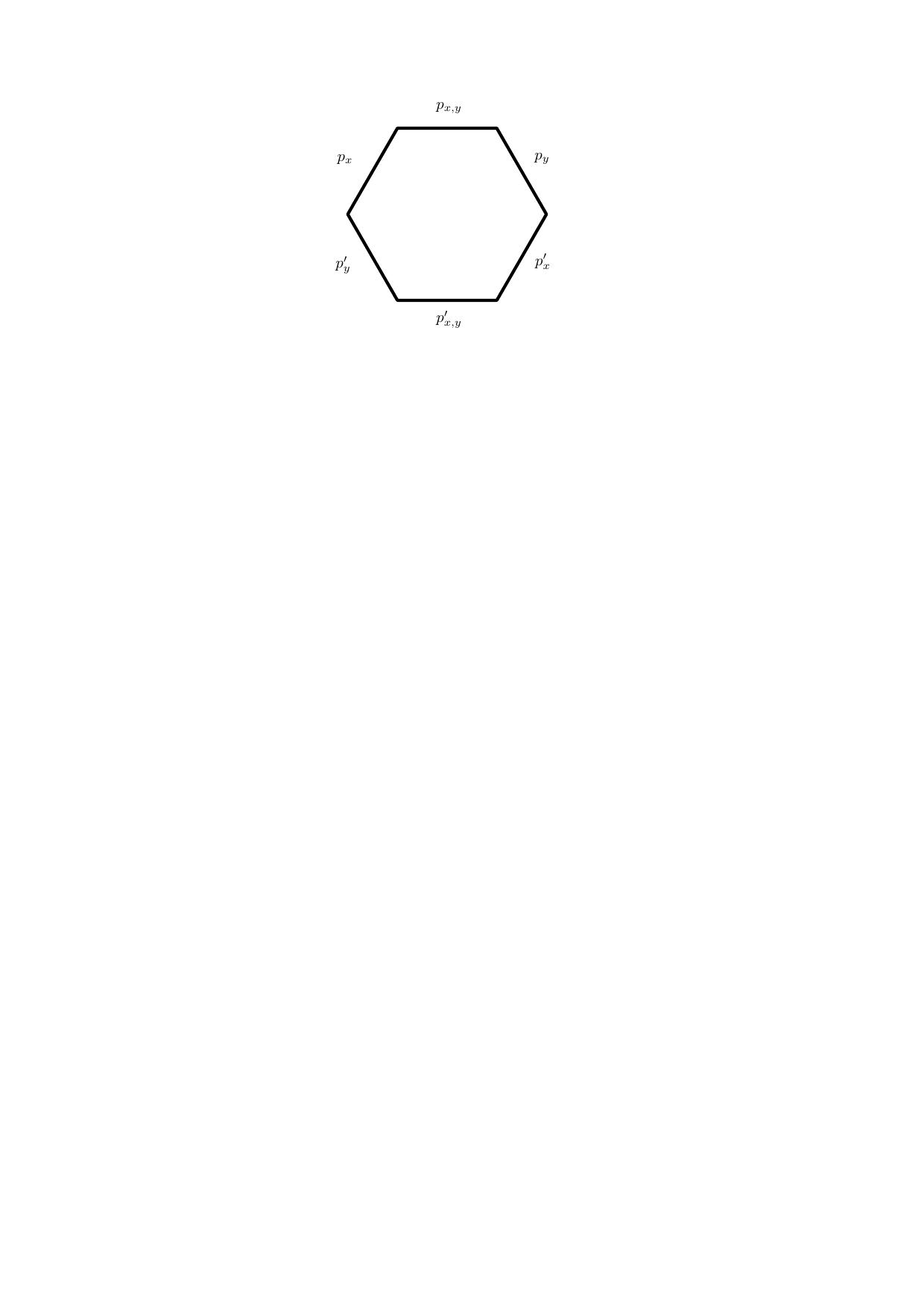}}}
\qquad
\subfloat[\centering ]{{\includegraphics[scale=0.90]{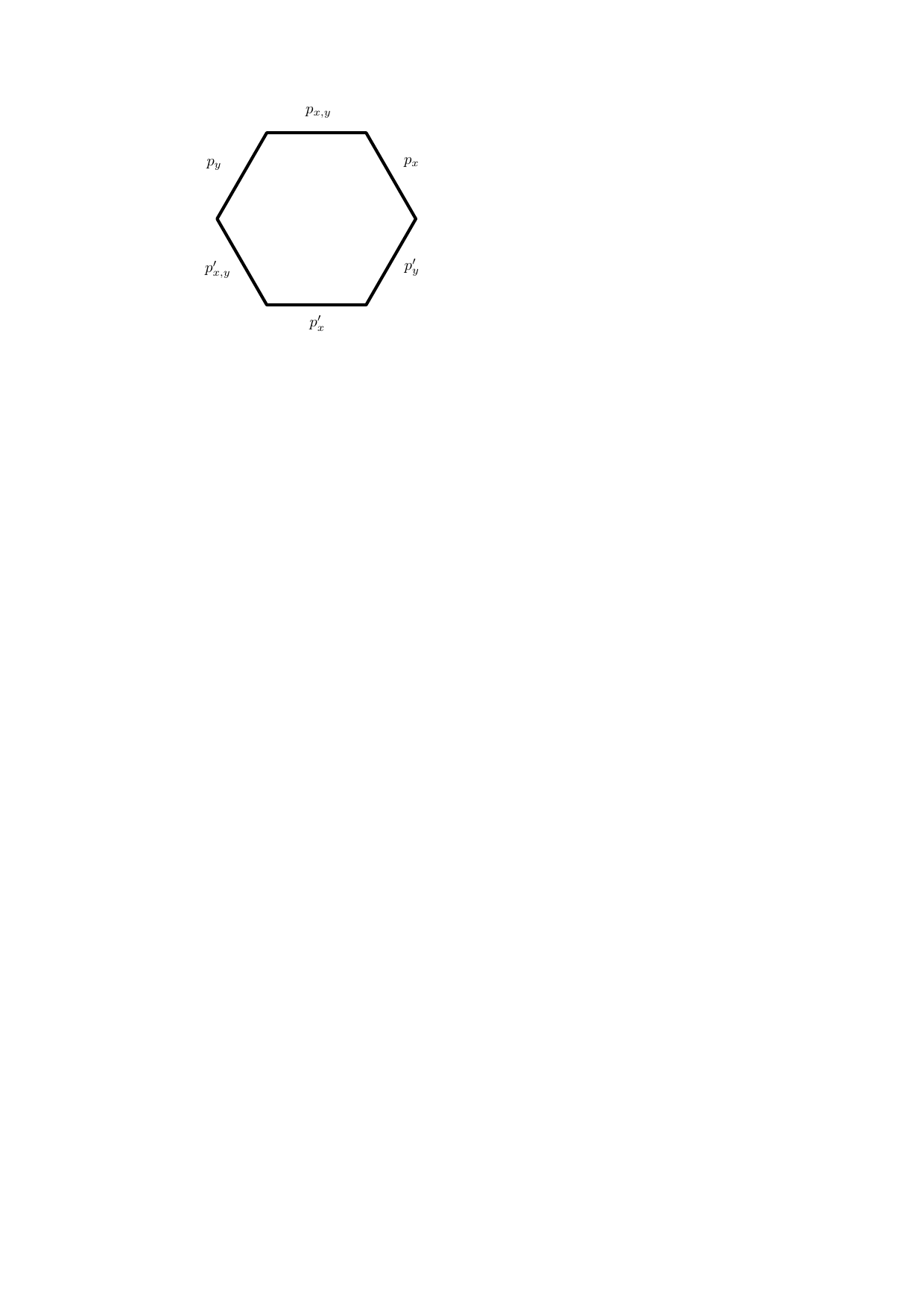} }}%
\caption{Fundamental polygons obtained by cutting along the edges of $x \cup y$. The left side results from cutting the torus, the right side from cutting the Klein bottle.}
\label{fig:polygons}
\end{figure}
The faces of the embedding of $G'$ are in one-to-one correspondence with the faces of the embedding of $G$, and by slight abuse of notation, we identify the faces of the two embeddings. Sums of face boundaries will always be taken in $G$, not $G'$.

Before we proceed with the proof, we need one more piece of notation.
Let $Q$ be a simple curve in $\Sigma'$ such that both endpoints lie in the boundary of $\Sigma'$. Embed the fundamental polygon in $\mathbb R^2$, and let $Q'$ be a curve in $\mathbb R^2$ connecting the endpoints of $Q$ and intersecting the fundamental polygon only in these endpoints. Then $Q \cup Q'$ is a simple closed curve. 
We say that a subset of $\Sigma'$ lies below $Q$ if it is contained in the bounded region of $\mathbb R^2 \sm (Q \cup Q')$, and above $Q$ if it is contained in the unbounded region of $\mathbb R^2 \sm (Q \cup Q')$; we note the roles of `below' and `above' may switch depending on the choice of $Q'$.
We say that $Q$ separates two subsets of $\Sigma'$ if one of them lies below $Q$ and one of them lies above $Q$.

We now distinguish two cases. Either some face boundary (in the embedding of $G'$) contains a path $Q$ connecting vertices in two non-consecutive sides of $\Sigma'$, or no face boundary contains such a path.

In the first case, note that by swapping the roles of the boundary parts we may assume that neither of the two endpoints of $Q$ is an interior vertex of $p_{x,y}$ or $p_{x,y}'$. Therefore we may without loss of generality assume that all edges of $p_{x,y}$ which do not lie on $Q$ lie below $Q$, and all edges of $p_{x,y}'$ which do not lie on $Q$ lie above $Q$. Let $\mathcal I$ be the set of faces whose interior lies below $Q$, and let $h = k = \sum_{f \in \mathcal I} f$, see \cref{fig_1}.
\begin{figure}
    \centering
\includegraphics[scale=0.9]{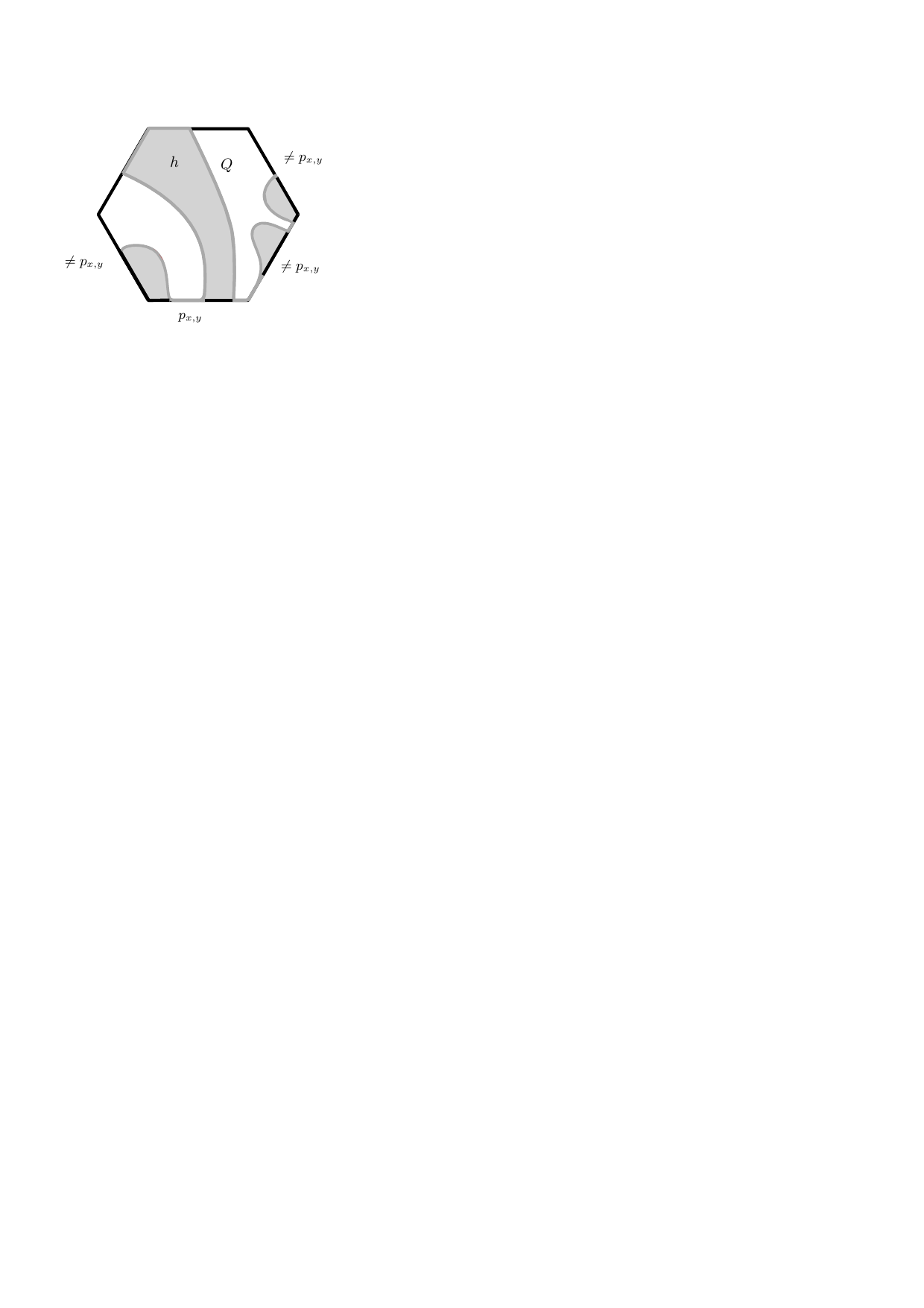}
    \caption{A path $Q$ separating the fundamental polygon into an area below $Q$ and an area above $Q$}
   \label{fig_1}
\end{figure}

Since $k = h$ we immediately obtain $x \cap k \subseteq h$ and $k \cap y \subseteq h$. Next, let $e \in x \cap y$ be an edge not contained in $Q$. Then one of the two copies of $e$ lies below $Q$ and the other one does not. Thus $\mathcal I$ contains exactly one of the two faces whose boundary $e$ lies in, and therefore $e \in h$. Finally, consider and edge $e \in h \sm (x \cup y)$. Since $e\in h$, we know that $\mathcal I$ contains exactly one of the two faces whose boundary $e$ lies in, and since $e \notin x \cup y$ this implies that $e$ is contained in $Q$.

Now assume that we are in the second case, so there is no face boundary that contains vertices in two non-consecutive sides of the fundamental polygon. Without loss of generality assume that $p_{x,y}$ shares a vertex with $p_x$ and a vertex with $p_y$. 

Let $v_0$ be the last vertex of $p_{x,y}$ (when traversing the path from $p_x$ to $p_y$) such that there is a face $f_0$ incident to $v_0$ and some vertex of $p_x$. Let $\mathcal I_x$ be the set of faces incident to edges of $p_{x,y}$ before $v_0$. Let $\mathcal I_y$ be the set of faces incident to edges of $p_{x,y}$ after $v_0$. Note that by tracing the boundary of $f_0$, we obtain a curve $Q$ separating all edges of $p_{x,y}$ before $v_0$ from all edges of $p_{x,y}$ after $v_0$. Thus $Q$ also separates the faces in $\mathcal I_x$ from the faces in $\mathcal I_y$, and face boundaries of faces in $\mathcal I_x$ and $\mathcal I_y$ can only intersect in $Q$.

Let $h = \sum_{f\in \mathcal I_x} f$ and let $k = \sum_{f\in \mathcal I_y} f$. Since we are in the second case, none of the faces in $\mathcal I_x \cup \mathcal I_y$ contains a vertex of $p_{x,y}'$, $p_x'$, or $p_y'$. Each edge of $p_{x,y}$ is contained in precisely one face boundary in $\mathcal I_x \cup \mathcal I_y$; this implies that $x \cap y \subseteq h \cup k$. Next note that $Q$ separates all faces in $\mathcal I_x$ from $p_y$. Since $Q$ is contained in the boundary of $f_0$ and intersects $p_x$, it cannot intersect $p_y$. This implies that no element of $\mathcal I_x$ contains an edge in $p_y$, and thus $h \cap y \subseteq x \cap y \subseteq x$. By definition of $v_0$, no element of $\mathcal I_y$ contains an edge in $p_x$, so $x \cap k \subseteq x \cap y \subseteq y$. Finally, as noted above face boundaries of faces in $\mathcal I_x$ and $\mathcal I_y$ can only intersect in $Q$, and since $Q$ is contained in the boundary of $f_0$ we conclude that $h \cap k \subseteq f_0$.
\end{proof}

\section{Basis numbers of graphs on surfaces of higher genus}

\begin{lem}
    \label{lem:subgraph}
    Let $G$ be a graph embedded on a surface $\Sigma$. Then $G$ contains a subgraph $H$ with Betti number $\beta(H) = 2-\chi(\Sigma)$ such that the cycles of $H$ together with the facial cycles of $G$ generate $\mathcal C(G)$. Consequently, $\bn(G) \leq \bn(H) + 2$.
\end{lem}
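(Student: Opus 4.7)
The plan is to build $H$ directly from the structure provided by Lemma \ref{lem:face-basis}. I would first fix any spanning tree $T$ of $G$ and invoke that lemma to obtain a set $U$ of $2-\chi(\Sigma)$ non-tree edges whose fundamental cycles, together with any $|\mathcal F|-1$ face boundaries, form a cycle basis of $G$. I then take $H$ to be the subgraph whose edge set is $U \cup E(T_H)$, where $T_H$ is the subforest of $T$ consisting of all tree edges that lie on at least one of the fundamental tree paths between the endpoints of the edges in $U$. Equivalently, $H$ is the union of the fundamental cycles of the edges of $U$.

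The first step is to compute $\beta(H)$. The subgraph $T_H$ is a forest, so $\beta(T_H) = 0$. For every $e = uv \in U$, the fundamental $u$-$v$ path in $T$ lies entirely in $T_H$ by construction, so $u$ and $v$ belong to the same connected component of $T_H$. Adding the edges of $U$ to $T_H$ one at a time therefore increases the Betti number by exactly one per edge, and we conclude $\beta(H) = |U| = 2 - \chi(\Sigma)$.

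Next I would establish the generating property. Each fundamental cycle of an edge of $U$ is a cycle lying inside $H$ and is thus an element of $\mathcal C(H)$. Hence $\mathcal C(H)$ together with the face boundaries contains the basis produced by Lemma \ref{lem:face-basis}, and therefore generates $\mathcal C(G)$.

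The consequence $\bn(G) \le \bn(H) + 2$ should then follow at once. Fix a $\bn(H)$-basis $\mathcal B_H$ of $\mathcal C(H)$, pick any $f_0 \in \mathcal F$, and set $\mathcal B := \mathcal B_H \cup (\mathcal F \setminus \{f_0\})$. The generating property above, together with the fact that $\mathcal B_H$ spans $\mathcal C(H)$, implies that $\mathcal B$ generates $\mathcal C(G)$; a dimension count via Euler's formula, $|\mathcal B| = (2 - \chi(\Sigma)) + (|\mathcal F| - 1) = |E| - |V| + 1$, confirms that $\mathcal B$ is a basis. Finally, any edge lies in at most two face boundaries (hence at most two members of $\mathcal F \setminus \{f_0\}$), and if it lies in $H$ it lies in at most $\bn(H)$ members of $\mathcal B_H$, yielding the claimed bound. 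I do not foresee a serious obstacle here; the one point needing care is the Betti-number computation when $T_H$ is disconnected, which is handled cleanly by the observation that both endpoints of each $e \in U$ land in the same component of $T_H$.
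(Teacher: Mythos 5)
Your proof is correct and follows essentially the same route as the paper, which simply takes $H$ to be the spanning tree $T$ together with the $2-\chi(\Sigma)$ edges of $U$ supplied by \Cref{lem:face-basis}. Your only deviation is trimming $H$ to the union of the fundamental cycles (possibly disconnecting it), which changes nothing since the cycle space dimension is still $|U|$, and you additionally spell out the counting argument for $\bn(G)\leq\bn(H)+2$ that the paper leaves implicit.
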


\begin{proof}
    This is a straightforward consequence of \Cref{lem:face-basis}: the graph $H$ consists of a spanning tree $T$ of $G$ together with the $2-\chi(\Sigma)$ edges whose fundamental cycles together with the facial cycles of $G$ generate the cycle space of $G$.
\end{proof}

In order to use the above lemma recursively, we bound the (orientable) genus of the graph $H$. It is straightforward to see that every graph with a Betti number of $1$ is planar. Additionally, Kuratowski demonstrated that this is also true for all graphs with Betti numbers $2$ and $3$. Later, Milgram and Ungar generalized Kuratowski's result.

\begin{lem}{\rm\cite{MR0469801}}\label{lem:max}
Let $ g(\beta) $ denote the largest (orientable) genus occurring among graphs with Betti number $ \beta $. 
Then the following holds:
\[
\frac{\beta}{2} + \frac{1}{2} \left( 3(\beta - 1) / (\log(\beta - 1)) \right) \leq g(\beta) \leq \frac{\beta}{2} - \frac{\beta}{4 \log \beta}.
\]
\end{lem}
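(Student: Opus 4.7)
The statement bounds the maximum orientable genus $g(\beta)$ attainable by a graph of Betti number $\beta$, and Euler's formula is the natural entry point: for any $2$-cell embedding of $G$ into the orientable genus-$g$ surface with $F$ faces, $g = (\beta(G) - F + 1)/2$. Hence upper bounding $g(\beta)$ is the same as lower bounding the number of faces in every embedding of every graph of Betti number $\beta$, while the lower bound on $g(\beta)$ means exhibiting graphs admitting embeddings with very few faces. (I read the left-hand inequality as $\beta/2 - \tfrac{3(\beta-1)}{2\log(\beta-1)} \le g(\beta)$, with a minus sign rather than the ``+'' that appears printed, since otherwise the lower bound would exceed the trivial inequality $g(\beta) \le \beta/2$ that Euler's formula immediately gives.)

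For the \textbf{upper bound} $g(\beta) \le \beta/2 - \beta/(4\log\beta)$, I would prove that every graph of Betti number $\beta$ has at least $\Omega(\beta/\log\beta)$ faces in every $2$-cell embedding. This follows from a Moore-type counting argument: a graph of Betti number $\beta$ cannot have girth much larger than $\log \beta$, because large girth forces many vertices relative to edges and hence few independent cycles. Since every face of any $2$-cell embedding has length at least the girth of $G$, the double count $F \cdot \mathrm{girth}(G) \le 2|E(G)|$ yields $F \ge \Omega(\beta/\log\beta)$, and the claimed bound on $g$ is immediate from Euler's formula. Very sparse graphs, for which this double count is vacuous, have $|E|$ comparable to $|V|$ and are essentially planar, so the inequality holds trivially there.

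For the \textbf{lower bound} I would invoke Xuong's characterization $\gamma_M(G) = (\beta(G) - \xi(G))/2$, where $\xi(G)$ denotes the Betti deficiency, thereby reducing the task to constructing, for each $\beta$, a graph of Betti number $\beta$ whose deficiency is at most $3(\beta-1)/\log(\beta-1)$. Natural candidates are dense bipartite graphs or suitable blow-ups of small graphs, for which $\xi$ can be controlled via the standard cotree-matching formulation of Xuong's theorem.

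The \textbf{principal difficulty} is quantitative tightness: the constants $1/4$ in the upper bound and $3/2$ in the lower bound are sensitive, and matching them requires following the delicate counting arguments and explicit constructions of Milgram and Ungar rather than the purely asymptotic outline sketched above; in particular the girth-counting lemma driving the upper bound needs to be sharpened, and the deficiency of the extremal construction has to be computed (not merely estimated) to hit the stated constant.
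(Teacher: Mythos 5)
The paper offers no proof of this lemma at all: it is quoted from Milgram and Ungar, so there is no internal argument to compare against, and your proposal has to stand on its own. Its framing via Euler's formula is right, and you are correct that the printed ``$+$'' in the lower bound must be a ``$-$'' (for a connected graph, $2g=\beta+1-F\le\beta$ in any $2$-cell embedding). But both halves of your sketch have genuine gaps. For the upper bound, the double count $F\cdot\mathrm{girth}(G)\le 2|E(G)|$ bounds $F$ from \emph{above}, not below, so it cannot yield $F\ge\Omega(\beta/\log\beta)$; worse, the intermediate claim that \emph{every} $2$-cell embedding of a graph with Betti number $\beta$ has $\Omega(\beta/\log\beta)$ faces is false --- upper-embeddable graphs such as $K_n$ have $2$-cell embeddings with only one or two faces while $\beta\sim n^2/2$. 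Since the genus corresponds to the embedding with the \emph{most} faces, what is needed is that \emph{some} embedding has at least $1+\beta/(2\log\beta)$ faces; that requires an actual construction (Milgram--Ungar essentially exploit that, after suppressing degree-$2$ vertices, a graph with Betti number $\beta$ has at most $2\beta-2$ vertices and hence contains cycles of length $O(\log\beta)$, which they arrange to be facial), and your outline supplies no mechanism for producing a single face-rich embedding.

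For the lower bound you aim at the wrong invariant: Xuong's formula $\gamma_M(G)=(\beta(G)-\xi(G))/2$ computes the \emph{maximum} genus, whereas $g(\beta)$ here is the largest \emph{minimum} genus among graphs with Betti number $\beta$ --- this is exactly how the lemma is used in the paper, to guarantee that the auxiliary graph $H$ embeds in a surface of strictly smaller genus. Small Betti deficiency says nothing about minimum genus (there are upper-embeddable planar graphs), and your candidate families also fail quantitatively: $K_{m,n}$ has genus about $\beta/4$ and $K_n$ about $\beta/6$, far from the required $\beta/2-O(\beta/\log\beta)$. The correct extremal examples are sparse graphs of large girth, e.g.\ cubic graphs with girth $\Theta(\log\beta)$, for which every face has length at least the girth and Euler's formula gives $g\ge \beta/2+1/2-|E|/\mathrm{girth}=\beta/2-O(\beta/\log\beta)$; in other words, the girth argument you invoked belongs, with the inequality pointing the other way, on this side of the proof.
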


\highgenus*

\begin{proof}
Let $H$ the graph obtained by \cref{lem:subgraph}. The Betti number of $H$ is at most $2g$ (in fact it is at most $g$ if $\Sigma$ is non-orientable). 
Next it follows from \cref{lem:max} that the orientable genus of $H$ is at most $g - \frac{g}{(2 \log (2g))}$.
Let $ \bn(g) $ denote the largest basis number occurring among all graphs with orientable genus $g$. 
Then by the above discussion
\[
\bn(g)\leq 2+\bn \left(g- \frac{g}{2 \log (2g)}\right).
\]

We use this recursion to show by induction that there exists a positive real number $ M $ and a positive integer $ g_0 $ such that $\bn(g) \leq M \log(g)^2$
for all $ g \geq g_0 $.

To this end, first let 
\[
     f(g) =  2 \log(g) \log \left(1 - \frac{1}{2 \log (2g)}\right) + \log \left(1 - \frac{1}{2 \log (2g)}\right)^2,
\]
and note that
\[
\lim_{g \to \infty } f(g) = -\frac{1}{\ln 2}
\]
which can be readily verified using de l'Hospital's rule. Pick $0 < \epsilon < \frac{1}{\ln 2}$, and let $g_0$ be large enough such that $f(g) < -\epsilon$ for every $g \geq g_0$. Then choose $M \in \mathbb R$ large enough such that $2+2g < M \log (g)^2$ for every $g < g_0$, and such that $\epsilon M \geq 2$.

The discussion after \cref{lem:face-basis} shows that $\bn(g) \leq 2+2g$. Our choice of $M$ implies that $\bn (g) \leq M \log (g)^2$ for every $g < g_0$. Now let $g \geq g_0$, and assume that $\bn(\ell)\leq M \log (\ell)^2$ for every $\ell<g$. Then
\begin{align*}
    \bn(g)& \leq  2+\bn(g- \frac{g}{2 \log (2g)})\\
    & \leq  2+ M\left(\log \left(g- \frac{g}{2 \log (2g)}\right)\right)^2\\
    & \leq  2+ M\left(\log (g)+\log \left(1- \frac{1}{2 \log (2g)}\right)\right)^2\\
    & \leq  2+ M\left(\log (g)^2+2\log(g)\log \left(1- \frac{1}{2 \log (2g)}\right)+\log \left(1- \frac{1}{2 \log (2g)}\right)^2\right)\\
    & \leq  2+ M\left(\log (g)^2+ f(g)\right)\\
    &\leq \log (g)^2 + 2 - \epsilon M\\
    &\leq \log (g)^2,
\end{align*}
thus finishing the induction step.
\end{proof}

\noindent {\bf{Acknowledgement}}\\
{The second author would like to thank Saman Bazargani for the helpful discussions during the preparation of this paper.
}
\bibliographystyle{plainurlnat}
\bibliography{bngg.bib}
\nocite{*}

\end{document}